\titleformat{\section}[block]{\large\bfseries\filcenter}{}{1em}{}
\titleformat{\subsection}[hang]{\bfseries}{}{1em}{}
\newtheorem{theorem}{Theorem}[section]
\newtheorem{definition}{Definition}[section]
\newtheorem{lemma}[theorem]{Lemma}
\newtheorem{corollary}[theorem]{Corollary}
\newtheorem{proposition}[theorem]{Proposition}
\newtheorem{remark}[theorem]{Remark}
\newtheorem{example}[theorem]{Example}
\numberwithin{equation}{section}
\newcommand{\field}[1]{\mathbb{#1}}
                   \newcommand{\Z}{\field{Z}}
\renewcommand{\ker}{\textnormal{ker}}
\begin{document}

\title{Excision and idealization of a multiplicative Lie algebra}
	
\author{Neeraj Kumar Maurya$^{1}$, Amit Kumar$^{2}$ AND Sumit Kumar Upadhyay$^{3}$\vspace{.4cm}\\
		{$^{1,2,3}$Department of Applied Sciences,\\ Indian Institute of Information Technology Allahabad\\Prayagraj, U. P., India} }
	
	\thanks{$^1$neerajkr2699@gmail.com, $^2$amitiiit007@gmail.com,  $^3$upadhyaysumit365@gmail.com}
	\thanks{Corresponding author:  Sumit Kumar Upadhyay}
	\thanks {2020 Mathematics Subject classification: 17B99, 20E99, 20N99}
		\keywords{Multiplicative Lie algebra, Lie algebra, Fiber product}

\begin{abstract}
In this article, we introduce the concepts of excision and idealization for a multiplicative Lie algebra (also for a Lie algebra), which provides two new multiplicative Lie algebras (or Lie algebras) from a given multiplicative Lie algebra (or Lie algebra) and an ideal, under certain conditions. These concepts may assist in classifying all multiplicative Lie algebras (or Lie algebras) of a specified order (or dimension).
\end{abstract}

\maketitle
\section{Introduction}
The main aim of this paper is to introduce the concept of excision multiplicative Lie algebra and the idealization of a multiplicative Lie algebra (a similar concept for a Lie algebra), motivated by the idea of an excision ring and the idealization of a ring. For a given ring without identity, in 1932, Dorroh \cite{DO} constructed a ring with identity in which that given ring is embedded. Anderson generalized this construction \cite[Theorem 2.1]{AND}  as follows: Let $R$ be a ring with identity and $A$ be a ring that is a unitary $R-R-$ bimodule. Then $R\oplus A$ is a ring with the following operations:
\begin{align*}
    (r_1, a_1)+(r_2, a_2) &= (r_1+r_2, a_1+a_2)\\
    (r_1, a_1)(r_2, a_2) &= (r_1r_2, r_1a_2+a_1r_2+a_1a_2).
\end{align*}
In particular, if $R$ is a commutative ring with identity and $I$ is an ideal of $R$, then $R\oplus I$ is called an excision ring. In \cite{NA}, there is the concept of idealization of a ring, which is given as follows:
Let $R$ be a ring with identity and $I$ be an ideal of $R$. Then $R\rtimes I$ is a ring with the following operations:
\begin{align*}
    (r_1, a_1)+(r_2, a_2) &= (r_1+r_2, a_1+a_2)\\
    (r_1, a_1)(r_2, a_2) &= (r_1r_2, r_1a_2+a_1r_2).
\end{align*}
By the motivation of the above constructions, the amalgamation of a ring with another ring along an ideal with respect to a ring homomorphism came into the picture (for details, see \cite{AN}).

The concept of multiplicative Lie algebra was introduced by Ellis \cite{GJ}  in 1993.  Since the multiplicative Lie algebra is a generalization of group and Lie algebra, the mathematician has started to explore the known concepts in groups and Lie algebras in the case of multiplicative Lie algebras. In 1996, Point and Wantiez \cite{FP} introduced the concept of nilpotency for multiplicative Lie algebras. In 2021, Pandey et al. \cite{MRS} discussed a different notion of solvable and nilpotent multiplicative Lie algebra. So many algebraic properties like homology theory, non-abelian tensor product, and Schur multiplier of a multiplicative Lie algebra have been studied in \cite{AGNM, GNM, GNMA, GM, RLS}. There is a question of how many multiplicative Lie algebra structures exist up to isomorphism on a given finite group. This question has been discussed for some cases in \cite{AD, MS, Walls}. The concept of excision and idealization may also help to answer this question. The idea of excision and idealization for Lie algebras may also be helpful to classify Lie algebras of a given dimension.  Now, we recall some basics of multiplicative Lie algebras. 

\begin{definition}\cite{GJ}
	A multiplicative Lie algebra is a triple $ (G,\cdot,\star), $ where $ (G,\cdot) $ is a group together with a binary operation $ \star $ on $G$ satisfying the following identities: 
	\begin{enumerate}
		\item $ x\star x=1 $  
		\item $ x\star(yz)=(x\star y){^y(x\star z)} $ 
		\item $ (xy)\star z= {^x(y\star z)} (x\star z) $ 
		\item $ ((x\star y)\star {^yz})((y\star z)\star{^zx})((z\star x)\star{^xy})=1 $ 
		\item $ ^z(x\star y)=(^zx\star {^zy})$ 
	\end{enumerate}
for all $x,y,z\in G$, where $^xy$ denotes $xyx^{-1}$. We say $ \star $ is a multiplicative Lie algebra structure on the group $G$.
\end{definition}
\begin{definition}\label{D1}
Let $(G,\cdot,\star)$ be a multiplicative Lie algebra. Then
\begin{enumerate}
\item A subgroup $H$ of $G$ is said to be a subalgebra of G if $x\star y \in H$ for all $x,y \in H$.

\item A subalgebra $H$ of $G$ is said to be an ideal of $G$ if it is a normal subgroup of $G$ and $x\star y \in H$ for all $x \in G$ and $y \in H$. 

\item The group center	$ Z(G) = \{x \in G \mid [x,y]=1 $ for all $y\in G\}$  is an ideal  of $G.$

\item The ideal	$ LZ(G) = \{x \in G \mid  x \star  y = 1 $ for all $y\in G\}$  is called the Lie center of $G.$ 

\item The ideal	$ MZ(G) = \{x \in G \mid  x \star  y = [x,y] $ for all $y\in G\}$  is called the multiplicative Lie center of $G.$ 

\item Let $(G',\circ , \star')$ be another multiplicative Lie algebra. A group homomorphism $\psi: G \to G'$ is called a multiplicative Lie algebra homomorphism if $\psi(x\star y) =\psi(x) \star ' \psi(y)$ for all $x, y \in G$.

\end{enumerate}
\end{definition}
\begin{definition}\cite{MRS}
    Let $(G_1,\cdot_1, \star_1)$ and  $(G_2,\cdot_2, \star_2)$ be multiplicative Lie algebras. Then, $(G_1\times G_2, \cdot, \star )$ is called the direct product of $G_1$ and $G_2$, where $(g_1,g_2)\cdot (g_3,g_4) = (g_1\cdot_1 g_3, g_2\cdot_2 g_4)$ and 
$(g_1,g_2)\star (g_3,g_4) = (g_1\star_1 g_3, g_2\star_2 g_4)$ for all $g_1, g_3\in G_1$ and $g_2, g_4\in G_2.$ 
\end{definition}
\section{Excision and Idealization}
In this section, we introduce the concept of an excision multiplicative Lie algebra and the idealization of a multiplicative Lie algebra.
\begin{lemma}\label{L1}
    Let $G$ be a group with a multiplicative Lie algebra structure $\star$, and let $I$ be an ideal of $G$ such that  $I\subseteq Z(G)$. Then $$^h(g\star b)={^hg\star b}=g\star b$$ for all $g,h \in G$ and $b\in I$. Also, $LZ(G)\cap I$ = $MZ(G)\cap I$.
\end{lemma}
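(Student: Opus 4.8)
The plan is to lean on two structural facts and then read off all three assertions. Fact one: since $I$ is an ideal, Definition \ref{D1}(2) gives $g\star b\in I$ for every $g\in G$ and every $b\in I$ — note we need the ideal element on the \emph{right} of $\star$, which is exactly what the definition supplies. Fact two: since $I\subseteq Z(G)$, every element of $I$ is central in the group $(G,\cdot)$; in particular both $b$ and the product $g\star b$ are fixed by all conjugations. Once these are recorded, the displayed identities follow from identity (5) of the definition of a multiplicative Lie algebra together with the definition $^xy=xyx^{-1}$.

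Concretely, first I would note that $g\star b\in I\subseteq Z(G)$, so $^h(g\star b)=h(g\star b)h^{-1}=g\star b$ for all $h\in G$; this is the rightmost equality. Next, because $b\in I\subseteq Z(G)$ we have $^hb=hbh^{-1}=b$, and feeding this into identity (5) (with $z=h$, $x=g$, $y=b$) yields
\[
{}^h(g\star b)={}^hg\star{}^hb={}^hg\star b,
\]
which is the leftmost equality; combining with the previous display gives $^h(g\star b)={}^hg\star b=g\star b$, as claimed.

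For the final assertion I would argue by double inclusion. If $b\in I$, then $b\in Z(G)$ forces $[b,y]=1$ for every $y\in G$; hence the condition ``$b\star y=1$ for all $y$'' defining membership of $b$ in $LZ(G)$ is word-for-word the condition ``$b\star y=[b,y]$ for all $y$'' defining membership of $b$ in $MZ(G)$. Consequently $b\in LZ(G)\cap I$ if and only if $b\in MZ(G)\cap I$, so the two intersections are equal.

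I do not anticipate a genuine obstacle: the whole statement reduces to the single observation that $b$ and $g\star b$ lie in $Z(G)$, so no conjugation does anything. The only points requiring a little care are invoking the ideal property in the correct direction and keeping track of which of the two centrality facts is used at each step; the write-up should make that bookkeeping explicit rather than disappear into symbol manipulation.
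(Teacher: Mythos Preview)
Your proof is correct and follows essentially the same route as the paper: both use that $g\star b\in I\subseteq Z(G)$ to kill the outer conjugation, and identity~(5) together with $^hb=b$ to handle the inner conjugation, while the equality $LZ(G)\cap I=MZ(G)\cap I$ is in both cases reduced to the observation that $[b,y]=1$ whenever $b\in I$. The only cosmetic difference is that the paper runs the displayed chain in one line whereas you separate the two equalities before combining them.
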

\begin{proof}
    Since $(g\star b)\in I$ and  $I\subseteq Z(G)$, we have $(g\star b)={^h(g\star b)}={^hg}\star {^hb}= {^hg}\star b$. Now, let $x\in LZ(G)\cap I $. Then $x\star y = 1$ and $[x,y] = 1$ (since $I\subseteq Z(G)$) for all $y\in G$. This implies $x\star y = [x,y]$ for all $y\in G$. Hence $x\in MZ(G)\cap I $. Next, if $x\in MZ(G)\cap I $, it is easy to see that $x\in LZ(G)\cap I $. Therefore, we are done.
\end{proof}

\begin{theorem}\label{T1}
    Let $G$ be a group with a multiplicative Lie algebra structure $\star$, and let $I$ be an ideal of $G$ such that $I\subseteq Z(G)$. Then
\begin{enumerate}
        \item the set $G\oplus I = \{(g,a) : g\in G, a\in I\}$
        forms a multiplicative Lie algebra with respect to the following two binary operations
    \begin{align*}
    (g, a)\cdot (h,b) &:= (gh,ab) \\
    (g, a)\star' (h,b) &:= \big(g\star h, (g\star b)(a\star h)(a\star b)\big)
    \end{align*}
 for all $g,h\in G$ and $a,b\in I$. We call $(G\oplus I, \cdot, \star' )$ an excision multiplicative Lie algebra.
 \item the set $G\rtimes I = \{(g, a): g\in G, a\in I\}$ forms a multiplicative Lie algebra with respect to the following two binary operations
    \begin{align*}
    (g, a)\cdot (h,b) &:= (gh,ab) \\
    (g, a)\star'' (h,b) &:= \big(g\star h, (g\star b)(a\star h)\big)
    \end{align*}
 for all $g,h\in G$ and $a,b\in I$. We call $(G\rtimes I, \cdot, \star'' )$ as an idealization of $G$ by $I$ and denote it by $G\rtimes I$.
    \end{enumerate}
    
\end{theorem}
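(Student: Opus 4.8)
The plan is to verify, for each of the two operations, that the underlying set is a group and that the proposed bracket satisfies the five defining identities of a multiplicative Lie algebra. In both cases the first binary operation is coordinatewise multiplication, and since $I\subseteq Z(G)$ the subgroup $I$ is abelian, so the underlying group is just the external direct product of $(G,\cdot)$ with $(I,\cdot)$ and needs no checking. Two facts will be used throughout. First, from identities (1)--(3) one obtains \emph{antisymmetry}, $x\star y=(y\star x)^{-1}$ for all $x,y\in G$ (expand $1=(xy)\star(xy)$ using (3), (2) and $x\star x=1$). Second, by Lemma \ref{L1} any bracket having at least one argument in $I$ is central and is fixed by every conjugation; together with antisymmetry and commutativity of $I$ this collapses identities (2) and (3), whenever the bracket values occurring are central, to plain bimultiplicativity, e.g. $g\star(bc)=(g\star b)(g\star c)$, $(ab)\star g=(a\star g)(b\star g)$, $a\star(hk)=(a\star h)(a\star k)$ and $(hk)\star a=(h\star a)(k\star a)$ for $g,h,k\in G$, $a,b,c\in I$. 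In particular the second coordinates of $\star'$ and $\star''$ are products of elements of $I$ and hence lie in $I$, so both operations are well defined.

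Granting this, identities (1), (2), (3) and (5) for $\star'$ are routine: in each, the first coordinate is exactly the corresponding identity for $\star$ on $G$, while the second coordinate, after using Lemma \ref{L1} to erase the conjugations (every term involved has an argument in $I$) and commutativity of $I$ to reorder, reduces to a triviality in the abelian group $I$; identity (1) additionally uses $a\star a=1$ together with antisymmetry, and identity (5) uses Lemma \ref{L1} to rewrite $({}^{k}g)\star b=g\star b$ and $a\star({}^{k}h)=a\star h$.

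The one laborious point is identity (4), the Jacobi-type identity, for $\star'$. With $x=(g,a)$, $y=(h,b)$, $z=(k,c)$, the first coordinate of $\bigl((x\star' y)\star'\,{}^{y}z\bigr)\bigl((y\star' z)\star'\,{}^{z}x\bigr)\bigl((z\star' x)\star'\,{}^{x}y\bigr)$ is $\bigl((g\star h)\star{}^{h}k\bigr)\bigl((h\star k)\star{}^{k}g\bigr)\bigl((k\star g)\star{}^{g}h\bigr)=1$, which is identity (4) for $\star$ on $G$. For the second coordinate I would expand each of the three doubly nested brackets using the bimultiplicativity above; because $I$ is abelian the outcome is an unordered product of twenty-one terms, each of the form $(u\star v)\star w$ with $u,v,w\in\{g,h,k,a,b,c\}$ and at least one of $u,v,w$ in $\{a,b,c\}$, all conjugations having vanished via Lemma \ref{L1}. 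The key is the special instance of identity (4): if at least one of $u,v,w$ lies in $I$, then Lemma \ref{L1} removes every conjugation from (4) and yields
\[
J(u,v,w):=\bigl((u\star v)\star w\bigr)\bigl((v\star w)\star u\bigr)\bigl((w\star u)\star v\bigr)=1.
\]
One then checks that the twenty-one terms are precisely the factors of the seven products $J(g,h,c)$, $J(h,k,a)$, $J(k,g,b)$, $J(a,b,k)$, $J(b,c,g)$, $J(c,a,h)$, $J(a,b,c)$ — each of them consuming exactly one term from each of the three nested brackets — so the second coordinate equals $1$.

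For the idealization $G\rtimes I$, the bracket $\star''$ is $\star'$ with every ``$(\text{second coordinate})\star(\text{second coordinate})$'' contribution deleted; repeating the computations above, in each identity this deletion merely removes a self-contained block of terms, and in identity (4) only the three products $J(g,h,c)$, $J(h,k,a)$, $J(k,g,b)$ — those involving exactly one argument from $I$ — survive and vanish as before. I expect the only genuine obstacle to be the bookkeeping in identity (4) for $\star'$: tracking the twenty-one terms and the places where Lemma \ref{L1} is invoked to drop a conjugation; no conceptual difficulty is anticipated beyond that.
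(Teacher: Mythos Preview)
Your proposal is correct and follows essentially the same route as the paper: direct verification of the five multiplicative Lie algebra axioms, repeatedly invoking Lemma~\ref{L1} to erase conjugations whenever an argument lies in $I$. Your handling of axiom~(4) is in fact more transparent than the paper's: the paper expands the three cyclic factors explicitly and then simply asserts that their product is $(1,1)$, whereas you identify the mechanism, namely that the twenty-one (resp.\ nine) second-coordinate terms regroup into seven (resp.\ three) instances $J(u,v,w)=1$ of the Jacobi identity in $G$ with at least one argument in $I$. This bookkeeping is exactly what the paper leaves to the reader.
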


\begin{proof}
\begin{enumerate}[\itshape(1)]
 \item It is clear that $(G\oplus I, \cdot)$ forms a group. We need to verify that $\star'$ is a multiplicative Lie algebra structure.
 
\begin{itemize}
\item   $(g,a)\star' (g,a) = \big(g\star g, (g\star a)(a\star g)(a\star a)\big) = (1,1). $
\vspace{.25cm}
\item $(g,a)\star' ((h,b)(h',b')) = (g,a)\star'(hh',bb') 
    =\big(g\star hh', (g\star bb')(a\star hh')(a\star bb')\big)\\
    = \big((g\star h)^h(g\star h'), (g\star b)^b(g\star b')(a\star h)^h(a\star h')(a\star b)^b(a\star b')\big)  \\
    = \big(g\star h,(g\star b)(a\star h)(a\star b)\big) \big({^h(g\star h')},(g\star b')(a\star h')(a\star b')\big) ~~( \text{ By Lemma} ~\ref{L1})\\
     = \big(g\star h,(g\star b)(a\star h)(a\star b)\big) \big({^h(g\star h')}, {^b((g\star b')(a\star h')(a\star b'))}\big) ~~( \text{ By Lemma} ~\ref{L1})\\
    = ((g,a)\star' (h,b)) ^{(h,b)}((g,a)\star' (h',b'))$ .
    
\vspace{.25cm}    
Similarly, we can prove
\item   $((g,a)(g',a'))\star' (h,b) = {^{(g,a)}}((g',a')\star' (h,b)) ((g,a)\star' (h,b))$.
\vspace{.25cm}
\item $((g,a) \star' (h,b)) \star' {^{(h,b)}}(k,c) = ((g,a) \star' (h,b)) \star' (^hk,c) \\
= ((g\star h),(g \star b)(a\star h)(a\star b)) \star' (^hk,c)\\
= ((g\star h)\star {^hk},((g \star h)\star c)(((g \star b)(a\star h)(a\star b))\star {^hk})(((g \star b)(a\star h)(a\star b))\star c))\\
=(((g \star h)\star {^hk}), ((g \star h)\star c)((g \star b)\star {^hk})((a \star h)\star {^hk})((a \star b)\star {^hk})((g \star b)\star c)((a \star h)\star c)((a \star b)\star c))  ~~(\text{since}~ I \subseteq Z(G))\\
=(((g \star h)\star {^hk}), ((g \star h)\star c){^h}((g \star b)\star {k}){^h}((a \star h)\star {k}){^h}((a \star b)\star {k})((g \star b)\star c)((a \star h)\star c)((a \star b)\star c))  ~~( \text{ By Lemma} ~\ref{L1})\\
=(((g \star h)\star {^hk}), ((g \star h)\star c)((g \star b)\star {k})((a \star h)\star {k})((a \star b)\star {k})((g \star b)\star c)((a \star h)\star c)((a \star b)\star c))  ~~( \text{ By Lemma} ~\ref{L1})$.

Similarly, we have 

$((h,b) \star' (k,c)) \star' {^{(k,c)}}(g,a)= (((h \star k)\star {^kg}), ((h \star k)\star a)((h \star c)\star {g})((b \star k)\star {g})((b \star c)\star {g})((h \star c)\star a)((b \star k)\star a)((b \star c)\star a))$  \\ and \\
$((k,c) \star' (g,a)) \star' {^{(g,a)}}(h,b)= (((k \star g)\star {^gh}), ((k \star g)\star b)((k \star a)\star {h})((c \star g)\star {h})((c \star a)\star {h})((k \star a)\star b)((c \star g)\star b)((c \star a)\star b))$. 

Finally, we get

$((g,a) \star' (h,b)) \star' {^{(h,b)}}(k,c))((h,b) \star' (k,c)) \star' {^{(k,c)}}(g,a))((k,c) \star' (g,a)) \star' {^{(g,a)}}(h,b))=(1,1)$. 

\item $^{(k,c)}((g,a) \star' (h,b))= \big({^k(g\star h)}, (g\star b)(a\star h)(a\star b)\big)= {^{(k,c)}(g,a) \star' {^{(k,c)}(h,b)}} ~~$ (By Lemma ~\ref{L1}).
\end{itemize}
Hence, we proved $(G\oplus I, \cdot, \star' )$ a multiplicative Lie algebra.

\item Similarly, we can prove that $(G\rtimes I, \cdot, \star'' )$ is a multiplicative Lie algebra.
\end{enumerate}
\end{proof}

The following lemma shows that $G \oplus I$ and $G\rtimes I$  are not always isomorphic as multiplicative Lie algebras.
\begin{lemma}\label{l1}
    $G\oplus I$ is isomorphic (as a multiplicative Lie algebra) to $G\rtimes I$ if and only if $I\star I = 1$. Also, $J = \{(1, a) \mid a \in I\}$ is an ideal of $G\rtimes I$ such that $J \star'' J = 1$, that is, $J$ is a nilpotent ideal of $G\rtimes I$.
\end{lemma}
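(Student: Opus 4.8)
The ``if'' direction is immediate: if $I\star I=1$ then $a\star b=1$ for all $a,b\in I$, so the factor $(a\star b)$ in the formula for $\star'$ disappears and $\star'=\star''$; hence $(G\oplus I,\cdot,\star')$ and $(G\rtimes I,\cdot,\star'')$ are literally the same multiplicative Lie algebra, in particular isomorphic.

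For the second assertion I would verify, in order: $(J,\cdot)$ is a subgroup of $(G\rtimes I,\cdot)$ since $(1,a)(1,b)=(1,ab)$; it is normal since conjugating $(1,a)$ by $(g,c)$ gives $(1,cac^{-1})=(1,a)$, because $I\subseteq Z(G)$ is abelian; $J\star''J=1$ (so $J$ is a subalgebra), since $(1,a)\star''(1,b)=(1\star1,\,(1\star b)(a\star1))=(1,1)$ using $1\star x=x\star1=1$; and $J$ is an ideal since $(g,c)\star''(1,a)=(g\star1,\,(g\star a)(c\star1))=(1,g\star a)\in J$, as $g\star a\in G\star I\subseteq I$ because $I$ is an ideal of $(G,\star)$, and symmetrically $(1,a)\star''(g,c)\in J$. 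Since $(J,\cdot)$ is abelian and $J\star''J=1$, $J$ is a nilpotent ideal of $G\rtimes I$.

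The content is the ``only if'' direction. Suppose $\varphi\colon G\oplus I\to G\rtimes I$ is an isomorphism of multiplicative Lie algebras and set $N:=\varphi^{-1}(J)$. Then $N$ is an ideal of $G\oplus I$ with $N\star'N=\varphi^{-1}(J\star''J)=1$ (so $N$ is Lie-abelian), with $|N|=|J|=|I|$, and with $(G\oplus I)/N\cong(G\rtimes I)/J\cong G$ as multiplicative Lie algebras. Thus the whole question reduces to the purely internal assertion: \emph{if the excision $G\oplus I$ admits a Lie-abelian ideal $N$ of order $|I|$ with $(G\oplus I)/N\cong G$, then $I\star I=1$}. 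Note that the obvious candidate $1\times I$ is \emph{not} Lie-abelian once $I\star I\ne1$, since $(1\times I)\star'(1\times I)=1\times(I\star I)$; so a hypothetical $N$ would have to project nontrivially under $\pi_1\colon G\oplus I\to G$, $(g,a)\mapsto g$. A convenient first step is to reduce to the case that the underlying group $G$ is abelian: $\varphi$ is a group automorphism of the common underlying group $G\times I$, hence carries its center $Z(G)\times I$ onto itself, and restricting $\varphi$ gives an isomorphism between $Z(G)\times I$ equipped with $\star'$ and $Z(G)\times I$ equipped with $\star''$ --- which are exactly the excision and the idealization of the multiplicative Lie algebra $(Z(G),\star|_{Z(G)})$ by $I$ (legitimate because $Z(G)$ is a subalgebra of $G$, being an ideal, and $I\subseteq Z(Z(G))=Z(G)$). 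So one may assume $G$ abelian, i.e.\ $G\oplus I$ and $G\rtimes I$ are Lie rings whose brackets differ only by the alternating form $\star|_I$ supported on the ideal $1\times I$.

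The main obstacle is this final comparison: one must show that when $\star|_I\ne0$ the excision $G\oplus I$ has \emph{no} Lie-abelian ideal of order $|I|$ with quotient $\cong G$, whereas the idealization always does (namely $1\times I$), and conclude $\star|_I=0$. The delicate point is that an isomorphism need not preserve $1\times I$, so one cannot simply transport the identity $J\star''J=1$ back along $\varphi$; one genuinely has to prove that the nonvanishing of $\star|_I$ cannot be absorbed by any change of coordinates. This is made subtle by the structural coincidence $G\star I\subseteq G\star G$, which causes most coarse invariants --- the Lie center, the multiplicative Lie center, the Lie-derived subalgebra $\Gamma\star\Gamma$, the Lie-centralizer of $\Gamma\star\Gamma$, and so on --- to ``swallow'' the $I\star I$ term and hence agree for $G\oplus I$ and $G\rtimes I$; a genuinely finer invariant is required. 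In the elementary-abelian case one natural candidate is the pencil of alternating forms $Z(\Gamma)^{*}\to\Lambda^{2}(\Gamma/Z(\Gamma))^{*}$ together with the isomorphism type of its Pfaffian degeneracy locus (a divisor on $\mathbb{P}^1$ up to $PGL_2$), which already separates the two structures in examples; isolating and proving the right invariant in general is where the work lies.
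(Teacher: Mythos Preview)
Your proof of the ``if'' direction and your verification that $J$ is a nilpotent ideal of $G\rtimes I$ are correct and are essentially what the paper does.

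For the ``only if'' direction your proposal is incomplete: you correctly set up the transported ideal $N=\varphi^{-1}(J)$, reduce to the case of abelian underlying group, identify the key obstacle (an arbitrary isomorphism need not carry $\{1\}\times I$ to itself), survey several invariants that fail to separate the two structures, and then stop, writing that ``isolating and proving the right invariant in general is where the work lies.'' That is a program, not a proof; as it stands you have not established the implication.

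The paper's own argument for this direction is far shorter than yours and, interestingly, vulnerable to exactly the objection you raise. The paper observes that $(J,\star'')$ carries the trivial bracket while $(J,\star')$ is nontrivial whenever $I\star I\neq1$, and then simply asserts that an isomorphism $G\oplus I\cong G\rtimes I$ would force $(J,\star')\cong(J,\star'')$, a contradiction. No reason is given for why an arbitrary multiplicative Lie algebra isomorphism must send the specific subset $J=\{1\}\times I$ to itself, and this is precisely the step you flagged as delicate. So you have not closed the gap, but you have located it more honestly than the paper's proof does; neither argument, as written, fully establishes the ``only if'' implication.
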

\begin{proof}
If $I\star I = 1$, then by Theorem \ref{T1},  $G\oplus I$ is isomorphic (as multiplicative Lie algebra) to $G\rtimes I$. Now, we prove the converse part. 

Consider the set $J = \{(1, a) \mid a \in I\}$. Then $(J, \cdot, \star')$ is an ideal of $G\oplus I$ and $(J, \cdot, \star'')$ is an ideal of $G\rtimes I$. It is clear that $(J, \cdot, \star'')$ is a trivial multiplicative Lie algebra. On the other hand, if $I\star I \ne 1$, then $(J, \cdot, \star')$ is an abelian group with a nontrivial multiplicative Lie algebra structure. So, if $G\oplus I$ is isomorphic (as multiplicative Lie algebra) to $G\rtimes I$, then $(J, \cdot, \star')$ should be isomorphic to $(J, \cdot, \star'')$,  which is a contradiction. Hence, $I\star I = 1.$
\end{proof}
\begin{proposition}
   The maps $p: G\oplus I \to G$ and $i: G\to G\oplus I$ defined by $p(g, a) = g$ and $i(g) = (g, 1)$ are  multiplicative Lie algebra homomorphisms with $p\circ i = Id_G$.
\end{proposition}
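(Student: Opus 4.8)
The plan is to verify, for each of $p$ and $i$, the two conditions defining a multiplicative Lie algebra homomorphism (Definition~\ref{D1}(6)): that the map is a group homomorphism and that it intertwines the respective $\star$-operations; the identity $p\circ i=\mathrm{Id}_G$ is then immediate. Since the group law on $G\oplus I$ is componentwise and $1\in I$, the assignment $i(g)=(g,1)$ indeed takes values in $G\oplus I$, and both maps are clearly group homomorphisms: $p\big((g,a)\cdot(h,b)\big)=p(gh,ab)=gh=p(g,a)\,p(h,b)$ and $i(gh)=(gh,1)=(g,1)\cdot(h,1)=i(g)\,i(h)$.

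For $p$, the $\star$-compatibility is a one-line check: $p\big((g,a)\star'(h,b)\big)=p\big(g\star h,\,(g\star b)(a\star h)(a\star b)\big)=g\star h=p(g,a)\star p(h,b)$, using only the definitions of $\star'$ and $p$. For $i$, we must show $i(g)\star' i(h)=i(g\star h)$, that is, $\big(g\star h,\,(g\star 1)(1\star h)(1\star 1)\big)=(g\star h,1)$, so the whole matter reduces to proving $x\star 1=1\star x=1$ for every $x\in G$. This is the only nontrivial point. It follows from the defining identities: setting $y=z=1$ in $x\star(yz)=(x\star y)\,{}^y(x\star z)$ yields $x\star 1=(x\star 1)(x\star 1)$, hence $x\star 1=1$; setting $x=y=1$ in $(xy)\star z={}^x(y\star z)(x\star z)$ yields $1\star z=(1\star z)(1\star z)$, hence $1\star z=1$ (in particular $1\star 1=1$, also a special case of $x\star x=1$). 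Consequently $(g\star 1)(1\star h)(1\star 1)=1$, giving $i(g)\star' i(h)=(g\star h,1)=i(g\star h)$.

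Finally, $p\circ i(g)=p(g,1)=g$ for all $g\in G$, so $p\circ i=\mathrm{Id}_G$, and the proof is complete. The main (essentially the only) obstacle is the elementary identity $x\star 1=1\star x=1$ dealt with above; everything else is direct substitution into the definitions of $\cdot$, $\star'$, $p$, and $i$.
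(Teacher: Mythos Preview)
Your proof is correct and complete; the paper itself states this proposition without proof, treating the verification as routine. Your argument follows exactly the expected direct check, and your explicit derivation of $x\star 1=1\star x=1$ from the axioms fills in the only step that is not pure substitution.
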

\begin{remark}\label{1}
\begin{enumerate}
\item $G\oplus Z(G)$ need not be isomorphic (as multiplicative Lie algebra) to $G\rtimes Z(G)$.
\item If the underlying group of a multiplicative Lie algebra $G$ is a nilpotent group of class $2$, then $[G, G] \star [G, G] = 1$. Hence, $G\oplus [G, G]$ is isomorphic (as multiplicative Lie algebra) to $G\rtimes [G, G]$.

\item If the underlying group of a multiplicative Lie algebra $G$ is abelian, then $G\oplus I$ and $G\rtimes I$ are multiplicative Lie algebras for any ideal $I$ of $G$.

 \item $G\oplus \mathcal{Z}(G)$, $G\rtimes \mathcal{Z}(G)$ and $G\times \mathcal{Z}(G)$ are all isomorphic, where $\mathcal{Z}(G) = Z(G) \cap LZ(G)$.
\end{enumerate}
 
\end{remark}
\subsection{Excision and idealization of a Lie algebra.} 
Let $L$ be a Lie algebra over a field $\mathbb{F}$ with a Lie bracket $[,]$, and let $M$ be an ideal of $L$.
\begin{enumerate}
    \item By Theorem \ref{T1} and Remark \ref{1} (3), $L\oplus M$ and $L\rtimes M$ are  also Lie algebras with the Lie brackets $[,]'$ and $[,]''$ defined as follows:
    \begin{align*}
        [(g,a), (h,b)]' &:= \big([g, h], [g, b][a, h][a, b]\big) \\
        [(g, a), (h,b)]'' &:= \big([g, h], [g, b][a, h]\big),
    \end{align*}
    respectively. In both cases, the underlying vector space structure is the direct product of vector spaces.   

\item Let $L$ be a Lie algebra of dimension $n$, and let $M$ be an ideal of $L$ of dimension $k$.  Then by Lemma \ref{l1}, $L\oplus M$ and $L\rtimes M$ are isomorphic for $k= 1.$ Also, $L\oplus M$ and $L\rtimes M$ are not isomorphic for $k\geq 2$, if induced Lie bracket on $M$ is non-trivial and $[L, M] \ne 0$.
\end{enumerate}


Next, we illustrate with an example that  $G \oplus I$ and $G\rtimes I$  are not necessarily isomorphic to the direct product $G \times I$ as multiplicative Lie algebra.
\begin{example}\label{E1}
Consider the Klein's four group $V_4 = \langle a,b:a^2 = b^2 = 1, ab = ba \rangle$ with the  multiplicative Lie algebra structure $ a\star b = a $ \cite[Theorem 2.5]{MS}. Let $I$ be the ideal  $V_4\star V_4 = \{1, a\}$ of $V_4$. Then by Lemma \ref{l1}, $V_4 \oplus I$ and $V_4 \rtimes I$ are isomorphic. Since  $(V_4 \times I)\star (V_4 \times I)\cong \Z_2$ and $(V_4 \oplus I)\star' (V_4 \oplus I)\cong V_4$, $V_4 \oplus I$ and $V_4 \times I$ are not isomorphic. 

\begin{remark}
We know that $V_4\cong \Z_2 \times \Z_2 $ is a $2$-dimensional Lie algebra over the field $\Z_2$ with respect to the bracket operation $[a,b]=a$ as given in Example \ref{E1}. Thus, by Example \ref{E1}, we have two distinct Lie algebras of dimension $3$, namely $V_4 \oplus I$ and $V_4 \times I$ over $\Z_2$.  
\end{remark}

\end{example}
\subsection{Structure of Ideals}
We know that subgroups of the direct product of two groups need not be the direct products of their subgroups. For example,  the subgroup $\Delta = \{(g, g) \mid g\in G\}$ of group $G\times G$ is not a direct product of subgroups of $G$. In this subsection, we will explore the structure of ideals in the context of excision and idealization for a multiplicative Lie algebra.
\begin{proposition}
Let $G$ be a multiplicative Lie algebra and $I$ be an ideal of $G$ contained in the center of $G$. Then $\Delta = \{(a^{-1}, a) \mid a\in I\}$ is an ideal of $G \oplus I$ but not an ideal of $G\rtimes I$ in general.
\end{proposition}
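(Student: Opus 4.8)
The plan is to verify directly that $\Delta$ meets the requirements of Definition \ref{D1}(2) inside $G\oplus I$ — a normal subgroup that is closed under $\star'$ and absorbs $\star'$ — and then to exhibit an explicit pair $(G,I)$ for which $\Delta$ is not even closed under $\star''$, hence not an ideal of $G\rtimes I$. For the preliminaries: since $I\subseteq Z(G)$, $I$ is abelian, so $\Delta$ is a subgroup of $G\oplus I$ (indeed $(a^{-1},a)(b^{-1},b)=((ab)^{-1},ab)$ and $(a^{-1},a)^{-1}=(a,a^{-1})$), and since the second coordinate of every element of $\Delta$ is central both in $G$ and in $I$, we get $\Delta\subseteq Z(G\oplus I)$ and $\Delta\subseteq Z(G\rtimes I)$; in particular $\Delta$ is normal in both. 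Because $I$ is an ideal, $x\star b\in I$ and $b\star x\in I$ for all $x\in G$, $b\in I$, so by Lemma \ref{L1} such elements are fixed by all conjugations; combining this with the identities $1\star y=1=y\star 1$ and $(x\star y)(y\star x)=1$ valid in any multiplicative Lie algebra, one obtains $x\star b^{-1}=(x\star b)^{-1}$ and $b^{-1}\star y=(b\star y)^{-1}$ for $b\in I$, and hence $a^{-1}\star b^{-1}=a\star b$ for $a,b\in I$.

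With this in hand the two $\star'$-computations are immediate. For $a,b\in I$,
\[
(a^{-1},a)\star'(b^{-1},b)=\big(a^{-1}\star b^{-1},\,(a^{-1}\star b)(a\star b^{-1})(a\star b)\big)=\big(a\star b,\,(a\star b)^{-1}\big),
\]
which is $(c^{-1},c)$ with $c=(a\star b)^{-1}\in I$, so $\Delta$ is a subalgebra; and for $(g,x)\in G\oplus I$,
\[
(g,x)\star'(a^{-1},a)=\big(g\star a^{-1},\,(g\star a)(x\star a^{-1})(x\star a)\big)=\big((g\star a)^{-1},\,g\star a\big)\in\Delta,
\]
the second coordinate collapsing because $x\star a^{-1}=(x\star a)^{-1}$ cancels $x\star a$. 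Since $\star'$ is antisymmetric (as $G\oplus I$ is a multiplicative Lie algebra), $(a^{-1},a)\star'(g,x)\in\Delta$ as well, and with normality this shows $\Delta$ is an ideal of $G\oplus I$.

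Running the same computation with $\star''$ replacing $\star'$ gives $(a^{-1},a)\star''(b^{-1},b)=\big(a\star b,\,(a\star b)^{-2}\big)$, which lies in $\Delta$ if and only if $a\star b=(a\star b)^2$, i.e. if and only if $a\star b=1$; so $\Delta$ is an ideal of $G\rtimes I$ precisely when $I\star I=1$, and fails to be one whenever $I\star I\neq1$. A concrete counterexample is $G=V_4$ with the multiplicative Lie algebra structure $a\star b=a$ of Example \ref{E1} and $I=G$ (which is central since $G$ is abelian): then $\Delta=\{(x,x)\mid x\in V_4\}$ and $(a,a)\star''(b,b)=(a,a^2)=(a,1)\notin\Delta$. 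The only step requiring real care is the preliminary one, namely checking that every conjugation appearing in the expansions of $\star'$ and $\star''$ is trivial because the corresponding $\star$-value lies in the central ideal $I$; once that is settled, the whole argument reduces to the observation that the triple in the $\star'$-formula telescopes to $(a\star b)^{-1}$ while the $\star''$-formula leaves the square $(a\star b)^{-2}$, and this single discrepancy is exactly what breaks the ideal property in the idealization.
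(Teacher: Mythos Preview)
Your proof is correct and follows essentially the same direct-verification approach as the paper: check normality, compute $(g,x)\star'(a^{-1},a)$ to see it lands in $\Delta$, and then observe the analogous $\star''$-computation fails. You are more thorough than the paper --- you supply the preliminary identities, a concrete counterexample in $V_4$, and the additional characterization that $\Delta$ is an ideal of $G\rtimes I$ precisely when $I\star I=1$ --- whereas the paper simply asserts ``$\notin\Delta$, in general'' without exhibiting a witness.
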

\begin{proof}
   It is clear that $\Delta$ is a normal subgroup of $G \oplus I$. So, let $(h,b) \in G \oplus I$ and $(a^{-1}, a)\in \Delta$. Then $(h,b)\star' (a^{-1}, a) = (h\star a^{-1}, (h\star a)(b\star a^{-1})(b\star a) ) = ((h\star a)^{-1},(h\star a)) \in \Delta $. Hence, $\Delta$ is an ideal of $G \oplus I$. However, for $(h,b) \in G \rtimes I$, we have $(h,b)\star'' (a^{-1}, a) = (h\star a^{-1}, (h\star a)(b\star a^{-1})) \notin \Delta$, in general. Hence, $\Delta$ need not be an ideal of $G\rtimes I$.
\end{proof}
The following theorem gives a necessary and sufficient condition for the structure of the ideals of $G \oplus I$ and $G\rtimes I$.
\begin{theorem}\label{P3}
Let $K$ and $J$ be two ideals of $G$ such that $J\subseteq I$. Then $K \oplus J~ (K \rtimes J)$ is an ideal of $G \oplus I~ (G\rtimes I)$ if and only if $I\star K \subseteq J$. 
\end{theorem}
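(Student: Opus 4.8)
The plan is to prove both statements (the $\oplus$ case and the $\rtimes$ case) in parallel, since the only difference lies in one extra term in the $\star'$-product. Fix ideals $K, J$ of $G$ with $J \subseteq I \subseteq Z(G)$. First I would record the group-theoretic part, which is independent of the Lie operation: $K \times J$ is a normal subgroup of $G \times I$ precisely because $K$ and $J$ are normal in $G$; since $J \subseteq I$, the set $K \oplus J = \{(k,a) : k \in K,\ a \in J\}$ is a well-defined subset of $G \oplus I$, and normality is automatic. So the content of the theorem is entirely about when $K \oplus J$ (resp.\ $K \rtimes J$) is closed under $\star'$ (resp.\ $\star''$) with elements of the ambient algebra, i.e.\ the ``absorption'' condition for an ideal.

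Next I would write out the absorption condition explicitly. For the idealization, take $(g,a) \in G \rtimes I$ (so $g \in G$, $a \in I$) and $(k,b) \in K \rtimes J$ (so $k \in K$, $b \in J$). Then
\[
(g,a) \star'' (k,b) = \big(g \star k,\ (g\star b)(a\star k)\big).
\]
The first coordinate $g \star k$ lies in $K$ because $K$ is an ideal of $G$. For the second coordinate: $g \star b \in I\star J \subseteq I \star I$, but more usefully $g \star b \in G \star J \subseteq J$ since $J$ is an ideal; and $a \star k \in I \star K$. Hence the second coordinate lies in $J$ for \emph{all} such $(g,a),(k,b)$ if and only if $I \star K \subseteq J$ — the ``if'' direction is immediate from $G\star J\subseteq J$ and $I\star K\subseteq J$, and the ``only if'' direction follows by specializing $b = 1$, $g$ arbitrary in $G$ (in particular $g$ ranging over a set whose $\star$-images with $K$ generate $I\star K$): if $K\rtimes J$ absorbs, then $(a \star k) \in J$ for every $a\in I$, $k\in K$, forcing $I\star K\subseteq J$. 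For the excision algebra the computation is identical except the second coordinate is $(g\star b)(a\star k)(a\star b)$; the new term $a\star b \in I\star J \subseteq I\star I$, but since $b\in J$ and $J$ is an ideal, $a\star b\in G\star J\subseteq J$ as well, so it contributes nothing new and the same equivalence $I\star K\subseteq J$ results.

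The only genuinely delicate point is the ``only if'' direction: one must check that the absorption failure for a single pair $(1,a)\star'' (k,1) = (1,\, a\star k)$ already shows $I\star K \not\subseteq J$ — this is clear, since $(1, a\star k) \in K\rtimes J$ forces $a\star k \in J$, and letting $a,k$ vary over $I,K$ gives $I\star K\subseteq J$. I would also note, for completeness, that $K\oplus J$ (resp.\ $K\rtimes J$) is automatically a subalgebra when $I\star K\subseteq J$: closure of $K$ under $\star$ (it is an ideal) handles the first coordinate, and the second-coordinate terms $k\star b,\ a\star k,\ a\star b$ all land in $J$ by the ideal property of $J$ together with $I\star K\subseteq J$. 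Assembling these observations — normality (free), first-coordinate closure (from $K$ being an ideal), and second-coordinate closure (equivalent to $I\star K\subseteq J$) — yields the stated biconditional in both cases. I expect no serious obstacle; the proof is a bookkeeping exercise in tracking which coordinate each term lands in, with Lemma~\ref{L1} invoked only implicitly through the fact that $I\subseteq Z(G)$ makes all the relevant $\star$-products lie in the appropriate ideals regardless of conjugation.
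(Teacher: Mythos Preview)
Your proposal is correct and follows essentially the same route as the paper: the paper tests absorption on $(1,a)\star'(h,1)=(1,a\star h)$ to extract $I\star K\subseteq J$ for the forward direction, and for the converse simply writes out $(g,a)\star'(h,b)$ and observes each factor of the second coordinate lands in $J$, exactly as you do. Your treatment is slightly more explicit than the paper's in separating out the normal-subgroup and subalgebra verifications and in handling the $\rtimes$ case rather than dismissing it with ``similarly'', but the argument is the same.
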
 
\begin{proof}
Suppose $K \oplus J$ is an ideal of $G \oplus I$. Then for $h\in K$and $a\in I$, $(1, a) \star'(h, 1)\in K \oplus J$. Thus $(1, a\star h) \in K \oplus J$, that is, $a\star h\in J$. Hence, $I\star K \subseteq J$. Conversely, suppose that $I\star K \subseteq J$. Let $(g,a)\in G\oplus I $ and $(h,b)\in K \oplus J$.  Since $I\star K \subseteq J$, we have $(g,a) \star' (h,b) = (g \star h, (g \star b)(a \star h)(a\star b))  \in K \oplus J$. Hence, $K \oplus J$ is an ideal of $G \oplus I$. Similarly, one can prove for $G\rtimes I$.
\end{proof}

\begin{corollary}\label{P4}
 Let $K$ be an ideal of $G$.  Then $K\oplus I~ (K \rtimes I)$ and $K\oplus (K \cap I)~ (K \rtimes (K \cap I))$ are ideals of $G\oplus I ~ (G\rtimes I)$. 
\end{corollary}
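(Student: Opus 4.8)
The plan is to derive both assertions directly from Theorem~\ref{P3} by checking the required containment condition in each of the two cases. Recall that Theorem~\ref{P3} says: for ideals $K, J$ of $G$ with $J \subseteq I$, the set $K \oplus J$ (resp. $K \rtimes J$) is an ideal of $G \oplus I$ (resp. $G \rtimes I$) if and only if $I \star K \subseteq J$. So the entire task reduces to verifying, for the two proposed ideals, that the appropriate subset $J$ contains $I \star K$.

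First I would handle $K \oplus I$ (and $K \rtimes I$). Here we take $J = I$, which trivially satisfies $J \subseteq I$; moreover $I$ is an ideal of $G$ by hypothesis and $K$ is an ideal of $G$ by hypothesis. The condition to check is $I \star K \subseteq I$. But $K \subseteq G$, so $I \star K \subseteq G \star I \subseteq I$ since $I$ is an ideal of $G$ (the Lie ideal property: $x \star y \in I$ for all $x \in G$, $y \in I$, together with the antisymmetry-type identity $x \star x = 1$ giving $y \star x = (x \star y)^{-1}$-type control, or simply using that $I$ is an ideal so $I \star K \subseteq I \star G \subseteq I$ after noting $a \star h = {}^{-1}(h \star a)$ lies in $I$). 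Thus the hypothesis of Theorem~\ref{P3} is met and $K \oplus I$ (resp. $K \rtimes I$) is an ideal.

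Next I would handle $K \oplus (K \cap I)$ (and $K \rtimes (K \cap I)$). Set $J = K \cap I$. First I must check $J$ is an ideal of $G$: it is the intersection of two ideals $K$ and $I$ of $G$, hence a normal subgroup of $G$ closed under $\star$ with elements of $G$, so $J$ is an ideal of $G$. Also $J = K \cap I \subseteq I$, as required. The condition to verify is $I \star K \subseteq K \cap I$. We already saw $I \star K \subseteq I$. For the other containment, $I \star K \subseteq K$ because $K$ is an ideal of $G$ and $I \subseteq G$, so $I \star K \subseteq G \star K \subseteq K$. Combining, $I \star K \subseteq K \cap I = J$, so Theorem~\ref{P3} applies and gives that $K \oplus (K \cap I)$ (resp. $K \rtimes (K \cap I)$) is an ideal.

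The argument is essentially a bookkeeping exercise once Theorem~\ref{P3} is in hand; the only point requiring any care is the mild subtlety that the ideal condition for a multiplicative Lie algebra controls $x \star y$ for $x \in G$, $y \in$ ideal, so to see $I \star K \subseteq I$ one uses that for $a \in I \subseteq G$ and $h \in K$, one has $a \star h \in K$ (from $K$ being an ideal) and equally $a \star h \in I$ is obtained from $h \star a \in I$ (from $I$ being an ideal) via $a \star h = ({}^{?}\!\cdots)$; cleanly, since both $K$ and $I$ are ideals and $I \subseteq Z(G)$ by the standing hypothesis of this section, Lemma~\ref{L1} makes the conjugation harmless and $a \star h \in K \cap I$ follows symmetrically. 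I expect no real obstacle here — the corollary is a straightforward specialization of the preceding theorem to the two natural choices $J = I$ and $J = K \cap I$.
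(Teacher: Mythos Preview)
Your approach is correct and matches the paper's intent: the paper states this result as an immediate corollary of Theorem~\ref{P3} with no separate proof, and your argument is precisely the routine verification that the containment $I \star K \subseteq J$ holds for $J = I$ and for $J = K \cap I$. The only cosmetic issue is the slightly muddled aside about antisymmetry; it suffices to note that $a \star k = (k \star a)^{-1}$, so $I \star K \subseteq I$ follows directly from $I$ being an ideal, and $I \star K \subseteq K$ from $K$ being an ideal.
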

In the following proposition, we identify specific ideals within the excision multiplicative Lie algebra. 

\begin{proposition}\label{P4}
Let $G\oplus I $ be the excision multiplicative Lie algebra. Then
\begin{enumerate}
\item $[G \oplus I,G \oplus I]=[G,G]\oplus\{1\}$.
\item $Z(G \oplus I) = Z(G) \oplus I = Z(G) \oplus (Z(G)\cap I)$.
\item  $LZ(G \oplus I) = LZ(G) \oplus (LZ(G)\cap I)$.
\item $\mathcal{Z}(G \oplus I) = \mathcal{Z}(G) \oplus (\mathcal{Z}(G)\cap I) $.
\item $MZ(G \oplus I) = MZ(G) \oplus (MZ(G)\cap I)$.
\end{enumerate}

\end{proposition}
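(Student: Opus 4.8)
The plan is to prove the five identities essentially independently: first the two group-theoretic items (1) and (2), then the two ``Lie'' items (3) and (5), and finally (4), which I will deduce by intersecting (2) and (3). Two elementary observations will be used throughout. Since $I\subseteq Z(G)$, the group commutator in $G\oplus I$ splits coordinatewise: for $(g,a),(h,b)\in G\oplus I$ we have $[(g,a),(h,b)]=([g,h],[a,b])=([g,h],1)$, because $a,b$ are central in $G$. Also, a subset of the form $K\oplus J=\{(g,a):g\in K,\ a\in J\}$ (with $K$ an ideal of $G$ and $J$ an ideal of $G$ with $J\subseteq I$, as in Theorem~\ref{P3}) intersects another such set coordinatewise: $(K_1\oplus J_1)\cap(K_2\oplus J_2)=(K_1\cap K_2)\oplus(J_1\cap J_2)$. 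I will freely use $x\star 1=1\star x=1$, and in part (5) the equality $LZ(G)\cap I=MZ(G)\cap I$ from Lemma~\ref{L1}.

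For (1): the commutator subgroup $[G\oplus I,G\oplus I]$ is generated by the elements $[(g,a),(h,b)]=([g,h],1)$, and as $g,h$ range over $G$ their products exhaust $[G,G]\times\{1\}$, which is $[G,G]\oplus\{1\}$. For (2): $(g,a)$ is central in the group $G\oplus I$ iff $gh=hg$ for all $h\in G$ and $ab=ba$ for all $b\in I$; the second condition is vacuous since $I\subseteq Z(G)$, so $Z(G\oplus I)=\{(g,a):g\in Z(G),\ a\in I\}=Z(G)\oplus I$, and $Z(G)\cap I=I$ gives the alternative form $Z(G)\oplus(Z(G)\cap I)$.

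For (3) and (5) I would analyse membership by separating the two coordinates of $(g,a)\star'(h,b)=\big(g\star h,\ (g\star b)(a\star h)(a\star b)\big)$ and probing the second coordinate with the choice $b=1$. For (3): $(g,a)\in LZ(G\oplus I)$ forces $g\star h=1$ for all $h\in G$, i.e.\ $g\in LZ(G)$; then $g\star b=1$, so the second coordinate becomes $(a\star h)(a\star b)=1$, which at $b=1$ yields $a\star h=1$ for all $h$, i.e.\ $a\in LZ(G)$ (and then $a\star b=1$, so the full identity holds). Conversely $g\in LZ(G)$ and $a\in LZ(G)\cap I$ clearly give $(g,a)\in LZ(G\oplus I)$, so $LZ(G\oplus I)=LZ(G)\oplus(LZ(G)\cap I)$. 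For (5): $(g,a)\in MZ(G\oplus I)$ iff $(g,a)\star'(h,b)=[(g,a),(h,b)]=([g,h],1)$ for all $(h,b)$; the first coordinate gives $g\star h=[g,h]$ for all $h\in G$, i.e.\ $g\in MZ(G)$, and the second coordinate at $b=1$ again forces $a\in LZ(G)$. Once $a\in LZ(G)$ and $g\in MZ(G)$, the second-coordinate identity reduces to $g\star b=1$ for $b\in I$, which holds since $g\star b=[g,b]=1$ as $b\in Z(G)$; so no further restriction appears. Thus $MZ(G\oplus I)=MZ(G)\oplus(LZ(G)\cap I)$, and Lemma~\ref{L1} rewrites $LZ(G)\cap I$ as $MZ(G)\cap I$.

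Finally, for (4): by definition $\mathcal{Z}(G\oplus I)=Z(G\oplus I)\cap LZ(G\oplus I)$, so (2), (3) and the coordinatewise intersection rule give $\mathcal{Z}(G\oplus I)=\big(Z(G)\cap LZ(G)\big)\oplus\big(I\cap LZ(G)\big)=\mathcal{Z}(G)\oplus(LZ(G)\cap I)$; and since $I\subseteq Z(G)$ we have $LZ(G)\cap I=Z(G)\cap LZ(G)\cap I=\mathcal{Z}(G)\cap I$, which is the claimed form. I do not expect a real obstacle: the computations are short, and the only point requiring care is matching the ``naive'' second factor $LZ(G)\cap I$ produced by the calculations in (5) and (4) with the stated $MZ(G)\cap I$ and $\mathcal{Z}(G)\cap I$ — precisely where Lemma~\ref{L1} and the hypothesis $I\subseteq Z(G)$ enter.
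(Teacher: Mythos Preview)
Your proof is correct and follows essentially the same approach as the paper's: both handle (1)--(2) by computing the group commutator and center coordinatewise, establish (3) and (5) by reading off the first coordinate of $\star'$ and probing the second coordinate at $b=1$, and obtain (4) by intersecting (2) and (3). The only cosmetic difference is that in (5) you first land on $a\in LZ(G)\cap I$ and then invoke Lemma~\ref{L1} to convert this to $MZ(G)\cap I$, whereas the paper observes directly that $a\star h=1=[a,h]$ (the latter because $a\in I\subseteq Z(G)$) already says $a\in MZ(G)$.
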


\begin{proof}
\begin{enumerate}[\itshape(1)]
\item $[G \oplus I,G \oplus I]= [G,G]\oplus [I,I] =[G,G]\oplus\{1\}.$ 
\item $Z(G \oplus I) = Z(G) \oplus Z(I) = Z(G) \oplus I.$ 
\item   Let $(g,a) \in LZ(G \oplus I)$. Then $(g,a)\star' (h,b) = (1,1)$ for all $(h,b) \in G\oplus I$. This implies $((g\star h),(g \star b)(a\star h)(a\star b))= (1,1)$, which gives $  (g\star h) =1 $ for all $h\in G$. Thus, we have $ g\in LZ(G)$. Also, $(g\star b)(a\star h)(a\star b)= 1$  for all $ h\in G$ and $b\in I$. In particular, $(a\star h)(a\star 1)= 1$  for all $ h\in G$, which implies $a \in LZ(G)$.  Hence, $ (g,a) \in  LZ(G) \oplus (LZ(G)\cap I) $.

Next, suppose $(g,a) \in LZ(G) \oplus (LZ(G)\cap I) $. Then $ (g\star h) = 1 = (a\star h) $ \ for all  $h\in G $. 
Thus, for $(h,b)\in G\oplus I$, we have
$(g,a)\star' (h,b) = ((g\star h),(g \star b)(a\star h)(a\star b))=(1,1)$. This shows that $ (g, a)\in LZ(G \oplus I)$. This completes the proof. 

\item By combining $\textit{(2)}$ and $\textit{(3)}$ to get the required result.

\item  Let $(g,a)\in MZ(G \oplus I).$ Then $(g,a)\star' (h,b)=[(g,a),(h,b)]$ for all $(h,b) \in G\oplus I.$ This implies that $(g\star h, (g\star b)(a\star h)(a\star b))= ([g,h], [a,b]).$ Thus, we have  $g\star h = [g,h]$ for all $h\in G$ and so $ g \in MZ(G).$ Also, $(g\star b)(a\star h)(a\star b)=[a,b]$ for all $b\in I$ and $h\in G.$  In particular, we have $(g\star 1) (a\star h)(a\star 1)= 1$ for all $h\in G.$ This gives 
$  a \star h = 1 = [a,h]$ for all $h \in G$ and so $a \in MZ(G)\cap I.$ Hence, $ (g,a) \in  MZ(G) \oplus (MZ(G)\cap I) .$

 For the reverse inclusion, suppose $(g,a) \in MZ(G) \oplus (MZ(G)\cap I) .$ Then, we have $g\star h = [g,h]$ and $ a \star h = [a,h] = 1$ for all $h\in G.$ Let $(h,b) \in G\oplus I.$ Then $(g,a)\star' (h,b) = ((g\star h),(g \star b)(a\star h)(a\star b)) = ([g,h],[g,b]) = ([g,h],1) = ([g,h],[a,b]) =[(g,a),(h,b)]$ since $I\subseteq Z(G).$ This proves the result.
\end{enumerate}
\end{proof}

\begin{remark}
    Proposition \ref{P4} also holds for idealization $G\rtimes I$.
\end{remark}

Now, we illustrate  Theorem \ref{P3}  with the help of an example. 
\begin{example}
Consider $V_4 = \langle a,b:a^2 = b^2 = 1, ab = ba \rangle$, with a multiplicative Lie algebra structure  $ a \star b = a $ \cite[Theorem 2.5]{MS}. Let $G=V_4 \times V_4$, and let $I=\{ (1,1),(1, a), (a,1), (a, a) \} $ be an ideal of $G$.  Suppose $K=\{ (1,1), (a,1)\}$ and $J=\{ (1,1), (1, a) \}$ are proper ideals of $G$.  It is straightforward to verify that $K\oplus J~~ (K \rtimes I) $ is an ideal of $G\oplus I ~~(G\rtimes I)$. Here, we note that $I\star K = \{(1,1)\}\subseteq J \subseteq I$.
\end{example}

The following example shows the necessity of the condition $I\star K \subseteq J\subseteq I$ in Theorem \ref{P3}.
\begin{example}
Consider the Dihedral group $D_4 = \langle x,y ~|~ x^4 = 1= y^2 , xy = yx^{-1}\rangle = G$ with a multiplicative Lie algebra structure $x \star y = x$ \cite[Theorem 2.5]{MS}, and take the ideal $I = D_4 \star D_4 = \langle x \rangle$ of $D_4$ . Let $K = D_4$ and $J = [D_4,D_4]= \langle x^2 \rangle$ be ideals of $D_4$. For $y\in K$ and $x\in I$, we have $ x \star y= x \notin J$, and so $I\star K \nsubseteq J$. We claim that $K\oplus J$ is not an ideal of $G \oplus I$. For that, let $(x,x) \in (G \oplus I)$ and $(y,1)\in ( K \oplus J)$. Then
 $(x,x) \star' (y,1)= (x \star y, (x \star 1)(x \star y)(x \star 1))=(x,x) \notin K \oplus J$ as $x \notin J$. 
\end{example}

The following example shows that not every ideal of $G\oplus I $ is of the form $K \oplus J$ as in Theorem \ref{P3}.
\begin{example}\label{E2}
Take $G = V_4 =  \langle a,b:a^2 = b^2 = 1, ab = ba \rangle = I$ with multiplicative Lie algebra structure  $ a \star b = a $ \cite[Theorem 2.5]{MS}. We know that $N = \{(1,1),(a, a), (b,b), (ab, ab)\}$ is a normal subgroup of $V_4\oplus V_4$. Let $(x,y)\in V_4\oplus V_4$ and $(z,z)\in N$. Then, it is easy to see that $(x,y)\star' (z,z)\in N$. Hence, $N$ is an ideal of $G\oplus I$, not of the form $K \oplus J$ as in Theorem \ref{P3}.

On the other hand, we see that $N$ is also a  normal subgroup of
$V_4\rtimes V_4$ but not an ideal of $V_4\rtimes V_4$ as $(a,b)\star''(a,a) = (a\star a, (a\star a)(b\star a)) = (1, a) \notin N$. Thus, $V_4\oplus V_4$ is not isomorphic to $V_4\rtimes V_4$.

\begin{remark}
 Note that $V_4\cong \Z_2 \times \Z_2 $ is a $2$-dimensional Lie algebra over the field $\Z_2$ with respect to the bracket operation $[a,b]=a$ as given in Example \ref{E2}. Since Example \ref{E2} implies that $V_4\oplus V_4$ and $V_4\rtimes V_4$ are not isomorphic.   So, we have two distinct Lie algebras of dimension $4$, namely $V_4\oplus V_4$ and $V_4\rtimes V_4$ over $\Z_2$.   
\end{remark}

\end{example}
The next example shows  that  every ideal of 
$G\rtimes I$ is also not of the form $K \rtimes J$ as in Theorem \ref{P3}.
\begin{example}
    Consider the cyclic group $G = \{1,x\} = I$. We know there is only a trivial multiplicative Lie algebra structure on $G$. Then $N = \{(1,1), (x,x)\}$ is an ideal of $G\rtimes I$ which is not of the form $K \rtimes J$ as in Theorem \ref{P3}.
\end{example}
\begin{remark}
In general, if $G$ is a non-trivial abelian group with trivial 
multiplicative Lie algebra structure, then the diagonal subgroup $\triangle = \{(g,g): g\in G\}$ forms an ideal of $G\rtimes G$ ($G\oplus G$) which is not of the form $K \rtimes J$  ($K\oplus J$)as in Theorem \ref{P3}.    
\end{remark}

\section{Iteration of the construction of $G\oplus I$} 
Let $G$ be a multiplicative Lie algebra, and $I$ is an ideal of $G$ such that  $I\subseteq Z(G)$. Consider the excision multiplicative Lie algebra $G\oplus I$. Since $\{1\}\oplus I$ is an ideal of $G\oplus I$ such that  $\{1\}\oplus I \subseteq Z(G\oplus I)$, we can define again  excision multiplicative Lie algebra $G\oplus_2 I = (G\oplus I)\oplus (\{1\}\oplus I)$. The operations in $G\oplus_2 I$ are given as follows:
\begin{align*}
   & ((g,a_1),(1,a_2))\cdot ((h,b_1),(1,b_2)) = ((gh,a_1b_1),(1,a_2b_2))\\
    &((g,a_1),(1,a_2))\tilde\star ((h,b_1),(1,b_2))\\
    &= \big((g\star h, (g\star b_1)(a_1\star h)(a_1\star b_1)), (1,(g\star b_2)(a_1\star b_2)(a_2\star h)(a_2\star b_1)(a_2\star b_2))\big)
\end{align*}
    
where $g,h\in G$ and $a_1, a_2,b_1, b_2\in I$.  

Inductively, we can define $G\oplus_n I$ for every $n.$

Also, it is easy to see that the set $G\oplus^2 I:= \{(g,(a_1,a_2)):  g\in G, a_1,a_2 \in I\}$  forms a multiplicative Lie algebra with respect to the following two binary operations:
    \begin{align*}
    (g,(a_1,a_2))\cdot (h,(b_1,b_2)) &:= (gh,(a_1b_1,a_2b_2)) \\
    (g, (a_1,a_2))\bar\star (h,(b_1,b_2)) &:= \big(g\star h, ((g\star b_1)(a_1\star h)(a_1\star b_1),(g\star b_2)(a_2\star h)(a_2\star b_2))\big).
    \end{align*}
Similarly, we can define $G\oplus^n I$ for every $n.$
\begin{theorem}\label{itera}
    The multiplicative Lie algebras $G\oplus_2 I$ and $G\oplus^2 I$ are isomorphic.
\end{theorem}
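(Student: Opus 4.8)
The plan is to write down an explicit isomorphism and check it directly. As \emph{sets}, both multiplicative Lie algebras are $G\times I\times I$: identify $((g,a_1),(1,a_2))\in G\oplus_2 I$ with the triple $(g,a_1,a_2)$, and $(g,(a_1,a_2))\in G\oplus^2 I$ with the same triple. Unwinding the definitions (and using that $I$ is abelian, since $I\subseteq Z(G)$), one sees that under this identification \emph{both} carry the componentwise group structure on $G\times I\times I$. So the group part is essentially automatic, and the whole content is to match the two Lie operations $\tilde\star$ and $\bar\star$. Note that the ``obvious'' identification $((g,a_1),(1,a_2))\mapsto(g,(a_1,a_2))$ does \emph{not} respect the Lie structures: the second $I$-coordinate of $\tilde\star$ carries the cross terms $(a_1\star b_2)(a_2\star b_1)$ that $\bar\star$ lacks. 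The right map will absorb these by a shear in the last coordinate.

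Concretely, I would define $\phi\colon G\oplus_2 I\to G\oplus^2 I$ by $\phi\big((g,a_1),(1,a_2)\big)=\big(g,(a_1,a_1a_2)\big)$, with candidate inverse $\psi\big(g,(c_1,c_2)\big)=\big((g,c_1),(1,c_1^{-1}c_2)\big)$. The first step is the easy one: check that $\phi$ and $\psi$ are mutually inverse and that $\phi$ is a group homomorphism; both are immediate bookkeeping from componentwise multiplication together with commutativity in $I$ (the map is a ``shear automorphism'' of $G\times I\times I$).

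The crux is the identity $\phi\big(X\tilde\star Y\big)=\phi(X)\bar\star\phi(Y)$ for $X=((g,a_1),(1,a_2))$ and $Y=((h,b_1),(1,b_2))$. Here the key tool is Lemma~\ref{L1}: whenever one of the two arguments of $\star$ lies in $I$, all conjugations $^{(-)}$ become trivial (since $I$ is an ideal, the value of $\star$ again lies in $I\subseteq Z(G)$), so on such arguments $\star$ is bi-multiplicative, $x\star(yz)=(x\star y)(x\star z)$ and $(xy)\star z=(y\star z)(x\star z)$, and all resulting factors commute. Writing $A_1=(g\star b_1)(a_1\star h)(a_1\star b_1)$ and $A_2=(g\star b_2)(a_1\star b_2)(a_2\star h)(a_2\star b_1)(a_2\star b_2)$, the definition of $\oplus_2$ gives $X\tilde\star Y=\big((g\star h,A_1),(1,A_2)\big)$, so $\phi(X\tilde\star Y)=\big(g\star h,(A_1,A_1A_2)\big)$. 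On the other side, $\phi(X)\bar\star\phi(Y)=(g,(a_1,a_1a_2))\bar\star(h,(b_1,b_1b_2))$ has first $I$-coordinate $(g\star b_1)(a_1\star h)(a_1\star b_1)=A_1$ and second $I$-coordinate $\big(g\star(b_1b_2)\big)\big((a_1a_2)\star h\big)\big((a_1a_2)\star(b_1b_2)\big)$. Expanding each of these three factors by bi-multiplicativity produces exactly the eight ``elementary'' products $(g\star b_1),(g\star b_2),(a_1\star h),(a_2\star h),(a_1\star b_1),(a_1\star b_2),(a_2\star b_1),(a_2\star b_2)$, which is the same multiset as in $A_1A_2$; since $I$ is abelian they agree. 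Finally one checks the analogous statements trivially reduce to the same computation (everything in sight that is not the ``$g\star h$'' coordinate lands in the abelian $I$). This gives $\phi\big(X\tilde\star Y\big)=\phi(X)\bar\star\phi(Y)$, and $\phi$ is the desired isomorphism.

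The only real obstacle is bookkeeping: correctly expanding the nested $\star$-operations and verifying that each expansion step is licensed by Lemma~\ref{L1} (each use needs one argument inside $I$, which always holds because $I$ is an ideal). There is no conceptual difficulty once the twist $\phi$ is in hand — the point is precisely that the cross terms $(a_1\star b_2)(a_2\star b_1)$ separating $\oplus_2$ from $\oplus^2$ are exactly what the factor $(a_1a_2)\star(b_1b_2)$ on the $\oplus^2$ side supplies after the shear.
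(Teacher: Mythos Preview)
Your proposal is correct and follows essentially the same approach as the paper: the paper also uses the shear map $\phi\big((g,a_1),(1,a_2)\big)=(g,(a_1,a_1a_2))$ with inverse $\psi(g,(a_1,a_2))=((g,a_1),(1,a_2a_1^{-1}))$, checks the group homomorphism property, and then verifies preservation of the Lie structure by the same expansion. Your write-up is in fact more explicit than the paper's in justifying the bi-multiplicativity via Lemma~\ref{L1} and in tracking the eight elementary factors that make $A_1A_2$ match the second $I$-coordinate on the $\bar\star$ side.
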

    \begin{proof} Define a map $\phi: G\oplus_2 I\to G\oplus^2 I  $ as  $$\phi((g,a_1),(1,a_2)) = (g,(a_1,a_1a_2))$$
Let $((g,a_1),(1,a_2)), ((h,b_1),(1,b_2))  \in G\oplus_2 I$. Then 
\begin{align*}
    \phi \big(((g,a_1),(1,a_2))((h,b_1),(1,b_2))\big) &= \phi \big(((gh,a_1b_1),(1,a_2b_2))\\
    &= (gh, (a_1b_1, a_1b_1a_2b_2))
    \\
    &= (g, (a_1, a_1a_2))(h, (b_1, b_1b_2))\\
    &=\phi((g,a_1),(1,a_2))\phi((h,b_1),(1,b_2))
\end{align*}
This shows that $\phi$ is a group homomorphism.
\begin{align*}
    &\phi \big(((g,a_1),(1,a_2))\tilde\star((h,b_1),(1,b_2))\big)\\ &= \phi\big((g\star h, (g\star b_1)(a_1\star h)(a_1\star b_1)), (1,(g\star b_2)(a_1\star b_2)(a_2\star h)(a_2\star b_1)(a_2\star b_2))\big)\\
    &= \big(g\star h, ((g\star b_1)(a_1\star h)(a_1\star b_1), (g\star b_1)(a_1\star h)(a_1\star b_1)(g\star b_2)(a_1\star b_2)(a_2\star h)(a_2\star b_1)(a_2\star b_2))\big)
    \\
    &= (g, (a_1, a_1a_2))\bar\star(h, (b_1, b_1b_2))\\
    &=\phi((g,a_1),(1,a_2))\bar\star\phi((h,b_1),(1,b_2))
\end{align*}
Thus, $\phi$ is a multiplicative Lie algebra homomorphism. It is easy to verify that the map $\psi: G\oplus ^2 I\to G\oplus_2 I $ defined as $(g,(a_1,a_2))\mapsto ((g,a_1),(1,a_2a^{-1}_1))$ is the inverse multiplicative Lie algebra homomorphism of $\phi.$ 
\end{proof}
\begin{remark}
    As Theorem \ref{itera}, one can also prove that $G\oplus_n I$ and $G\oplus^n I$ are isomorphic for every $n\in \mathbb{N}.$ For example, the map $\psi: G\oplus_3  I\to G\oplus ^3 I $ defined as $(g,(a_1,a_2, a_3))\mapsto (((g,a_1),(1,a_2a^{-1}_1)), ((1,1),(1,a_3a^{-1}_2))$ gives the multiplicative Lie algebra isomorphism with the inverse map  $\phi: G\oplus ^3  I\to G\oplus_3  I $ defined as $(((g,a_1),(1,a_2)), ((1,1),(1,a_3))\mapsto (g,(a_1,a_1a_2, a_2a_3))$
\end{remark}


\section{Excision as a Fiber Product}
Let $G_1, G_2$ and $H$ be multiplicative Lie algebras, and $\phi_1:G_1\to H$ and $\phi_2:G_2\to H$ be multiplicative Lie algebra homomorphisms. The fiber product of $G_1$ and $G_2$
over  $H$ is defined as:
$$G_1 \times_{H} G_2 = \{(g_1,g_2)\in G_1 \times G_2 \ | \ \phi_1(g_1)=\phi_2(g_2) \}.$$ Then $G_1 \times_{H} G_2$ is a subalgebra of $G_1 \times G_2$. In other words, the following diagram commutes:
\begin{center}
$\begin{CD}
{G_1 \times_{H} G_2} @>{p_2}>> G_2\\
@VV{p_1}V @VV{\phi_2}V\\
G_1 @>{\phi_1}>> H
\end{CD}$
\end{center}
where $p_1$ and $p_2$ are the first and second projections, respectively.

The following result establishes an isomorphism of excision multiplicative Lie algebra with certain fiber products. 
\begin{theorem}\label{T2}
Let $G\oplus I $ be the excision multiplicative Lie algebra. Then
$G\oplus I $ is isomorphic to the following fiber products:
\begin{enumerate}
    \item  $G \times_{\frac{G}{I}} G = \{(g,g')\in G \times G \ | \ g = g'(\text{mod}~ I) \} 
     = \{(g,ga)\in G \times G \ | \  a\in I\}.$ 

    \item $(G \times G)\times_{(G\times \frac{G}{I})} G.$

    \item $(G \times G)\times_{(\frac{G}{I}\times \frac{G}{I})} \frac{G}{I}.$
\end{enumerate}
 
\end{theorem}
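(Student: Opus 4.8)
\emph{The plan is to prove (1) directly via an explicit ``twisted'' isomorphism, and then to obtain (2) and (3) as essentially formal consequences of (1).}

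\textbf{Part (1).} Since $I$ is an ideal, $G/I$ is a multiplicative Lie algebra and the quotient map $\pi\colon G\to G/I$ is a homomorphism; taking $\phi_1=\phi_2=\pi$ one has $G\times_{G/I}G=\{(g,g')\in G\times G: g^{-1}g'\in I\}=\{(g,ga):g\in G,\ a\in I\}$, a subalgebra of $G\times G$ with the componentwise structure. I would define $\Phi\colon G\oplus I\to G\times_{G/I}G$ by $\Phi(g,a)=(g,ga)$; it is a bijection with inverse $(g,g')\mapsto(g,g^{-1}g')$, and it respects $\cdot$ because $a\in Z(G)$. To see that $\Phi$ respects the Lie operation, note that since $a\in Z(G)$ conjugation by $a$ is trivial, so by axiom (3) and then axiom (2),
\[
(ga)\star(hb)=(ag)\star(hb)={}^a\!\big(g\star(hb)\big)\big(a\star(hb)\big)=\big(g\star(hb)\big)\big(a\star(hb)\big)=(g\star h)\,{}^h(g\star b)\,(a\star h)\,{}^h(a\star b);
\]
since $g\star b$ and $a\star b$ lie in $I$, Lemma \ref{L1} deletes the two conjugations, giving $(ga)\star(hb)=(g\star h)(g\star b)(a\star h)(a\star b)$, which is precisely the second coordinate of $\Phi\big((g,a)\star'(h,b)\big)$ and of $\Phi(g,a)\star\Phi(h,b)=(g,ga)\star(h,hb)$. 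Thus $\Phi$ is a bijective homomorphism of multiplicative Lie algebras, hence an isomorphism.

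\textbf{Parts (2) and (3).} These follow once the structure maps --- left implicit in the statement --- are chosen so that the fibre product collapses onto $G\times_{G/I}G$. For (2) I would take $\phi_1=\mathrm{id}_G\times\pi\colon G\times G\to G\times (G/I)$ and $\phi_2\colon G\to G\times(G/I)$, $\phi_2(g)=(g,\pi(g))$; then $\phi_1(g_1,g_2)=\phi_2(g_3)$ forces $g_3=g_1$ and $\pi(g_1)=\pi(g_2)$, so $(G\times G)\times_{(G\times G/I)}G=\{((g_1,g_2),g_1):\pi(g_1)=\pi(g_2)\}$. For (3) I would take $\phi_1=\pi\times\pi\colon G\times G\to(G/I)\times(G/I)$ and $\phi_2\colon G/I\to(G/I)\times(G/I)$ the diagonal, giving $(G\times G)\times_{(G/I\times G/I)}(G/I)=\{((g_1,g_2),\pi(g_1)):\pi(g_1)=\pi(g_2)\}$. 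In both cases the remaining coordinate is completely determined by $(g_1,g_2)$, so the projection onto the $G\times G$-factor is injective; its image is exactly $G\times_{G/I}G$, and it is a multiplicative Lie algebra homomorphism because all operations on a direct product, hence on any subalgebra of it, are componentwise. Therefore this projection is an isomorphism onto $G\times_{G/I}G$, and composing it with $\Phi^{-1}$ from part (1) yields the claimed isomorphism with $G\oplus I$.

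\textbf{Main obstacle.} The only genuine computation is the Lie-homomorphism identity in part (1); the delicate point is purely the bookkeeping of conjugations --- one must apply Lemma \ref{L1} precisely to those $\star$-terms that already lie in $I$, and exploit $I\subseteq Z(G)$ so that conjugation by the ``$I$-part'' is trivial. Parts (2) and (3) are then formal, the one subtlety being the choice of the structure maps $\phi_1,\phi_2$: they must be selected so that the fibre product reduces to $G\times_{G/I}G$ and not, for instance, to the genuine direct product $G\times I$, which by Example \ref{E1} is generally not isomorphic to $G\oplus I$.
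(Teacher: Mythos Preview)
Your proof is correct and, for part (1), matches the paper's argument almost verbatim: the paper defines the inverse map $f_1\colon G\times_{G/I}G\to G\oplus I$, $(g,ga)\mapsto(g,a)$, and carries out the same expansion of $(g_1a_1)\star(g_2a_2)$ using axioms (2), (3) and Lemma~\ref{L1}.

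For parts (2) and (3) you take a slightly more economical route. The paper repeats the full computation from (1) in each case: it writes out $\mathcal{G}_2=\{((g,gb),g):b\in I\}$ and $\mathcal{G}_3=\{((g,gc),gI):c\in I\}$ using exactly the structure maps you chose, then defines explicit maps $f_2,f_3$ to $G\oplus I$ and reverifies the Lie-homomorphism identity by expanding $(g_1b_1)\star(g_2b_2)$ once more. Your observation that the redundant coordinate makes the projection onto the $G\times G$-factor an isomorphism onto $G\times_{G/I}G$, so that (2) and (3) reduce to (1), avoids this repetition; the paper's approach buys nothing extra beyond being self-contained in each part.
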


\begin{proof}
\begin{enumerate}[\itshape(1)]
    \item 
Define a map $f_1:G \times_{\frac{G}{I}} G\to G\oplus I $  such that $(g,ga)\mapsto (g,a)$. It is easy to see that $f_1$ is a surjective group homomorphism and $\ker f_1=(1,1)$. Also, 
\begin{align*}
    f_1((g_1,g_1a_1)\star (g_2,g_2a_2)) &= f_1(g_1\star g_2,(g_1a_1)\star(g_2a_2)) \\
    &= f_1(g_1\star g_2, (g_1\star g_2)^{g_2}(g_1\star a_2)^{g_1}(a_1\star g_2)^{g_1g_2}(a_1\star a_2)) \\
    &= f_1(g_1\star g_2, (g_1\star g_2)(g_1\star a_2)(a_1\star g_2)(a_1\star a_2))  ~~   (\text{ By Lemma} ~\ref{L1}) \\
    &= (g_1\star g_2, (g_1\star a_2)(a_1\star g_2)(a_1\star a_2)) = (g_1,a_1)\star' (g_2,a_2)\\
    &= f_1((g_1,g_1a_1))\star' f_1((g_2,g_2a_2)).
\end{align*}
This implies that $f_1$ is a multiplicative Lie algebra isomorphism. In fact, $G\oplus I $ is isomorphic to a subalgebra of $G \times G$. 

\item We denote $(G \times G)\times_{(G\times \frac{G}{I})} G$ by $ \mathcal{G}_2 $. Then, we have the following commutative diagram:
\begin{center}
$\begin{CD}
\mathcal{G}_2 @>{p_2}>> G\\
@VV{p_1}V @VV{\phi}V\\
G \times G @>{\psi}>> {G\times \frac{G}{I}.}
\end{CD}$
\end{center}
That means 
\begin{align*}
    \mathcal{G}_2  &= \{((g,g'),g'')\in (G \times G)\times G \ | \ \psi(g, g')= \phi(g'') \} \\
     &= \{((g,g'),g'')\in (G \times G)\times G \ | \ (g, g'I)= (g'',g''I) \} = \{((g,gb),g)\in (G \times G)\times G \ | \ b\in I \}.
\end{align*}
Define a map $f_2:\mathcal{G}_2\to G\oplus I $  such that $((g,gb),g)\mapsto (g,b)$. It is easy to see that $f_2$ is a surjective group homomorphism and $\ker f_2=(1,1)$. Next, 
\begin{align*}
    & f_2\Big(((g_1,g_1b_1),g_1)\star ((g_2,g_2b_2),g_2)\Big) \\
    &= f_2\Big((g_1\star g_2,(g_1b_1)\star(g_2b_2)),g_1\star g_2\Big) \\
    &= f_2\Big((g_1\star g_2, (g_1\star g_2)^{g_2}(g_1\star b_2)^{g_1}(b_1\star g_2)^{g_1g_2}(b_1\star b_2)),g_1\star g_2\Big) \\
    &= f_2\Big((g_1\star g_2, (g_1\star g_2)(g_1\star b_2)(b_1\star g_2)(b_1\star b_2)),g_1\star g_2\Big)  ~~   (\text{ By Lemma} ~\ref{L1}) \\
    &= (g_1\star g_2, (g_1\star b_2)(b_1\star g_2)(b_1\star b_2))\\
    &= (g_1,b_1)\star' (g_2,b_2)\\
    &= f_2(((g_1,g_1b_1),g_1))\star' f_2(((g_2,g_2b_2),g_2)).
\end{align*}
This implies that $f_2$ is a multiplicative Lie algebra isomorphism. In particular, $G\oplus I $ is isomorphic to a subalgebra of $G \times G \times G$. 

\item Finally, we denote $(G \times G)\times_{(\frac{G}{I}\times \frac{G}{I})} \frac{G}{I}$ by $ \mathcal{G}_3 $. Then, we have the following commutative diagram:
\begin{center}
$\begin{CD}
\mathcal{G}_3 @>{p_2}>> \frac{G}{I}\\
@VV{p_1}V @VV{\mu}V\\
G \times G @>{\nu}>> {\frac{G}{I}\times \frac{G}{I}.}
\end{CD}$
\end{center}
That means 
\begin{align*}
    \mathcal{G}_3  &= \{((g,g'),g''I)\in (G \times G)\times \frac{G}{I} \ | \ \nu(g, g')= \mu(g''I) \} \\
     &= \{((g,g'),g''I)\in (G \times G)\times \frac{G}{I} \ | \ (gI, g'I)= (g''I,g''I) \}\\
     &= \{((g,gc),gI)\in (G \times G)\times \frac{G}{I} \ | \ c\in I \}.
\end{align*}
Now, the map $f_3:\mathcal{G}_3\to G\oplus I $ defined by $((g,gc),gI)\mapsto (g,c)$ can be easily seen as a multiplicative Lie algebra isomorphism. In particular, $G\oplus I $ is isomorphic to a subalgebra of $G \times G \times \frac{G}{I}$. 
\end{enumerate}
 
\end{proof}
\begin{remark}
\begin{enumerate}
    \item Since any subalgebra of a nilpotent (solvable) multiplicative Lie algebra is nilpotent (solvable), and a direct product of two nilpotent (solvable)  multiplicative Lie algebras is nilpotent (solvable) (see \cite{FP}), by Theorem \ref{T2}, $G\oplus I $ is nilpotent (solvable).

    \item Since any subalgebra of a Lie nilpotent (Lie solvable) multiplicative Lie algebra is Lie nilpotent (Lie solvable) and a direct product of two Lie nilpotent (Lie solvable)  multiplicative Lie algebras is Lie nilpotent (Lie solvable) (see \cite{MRS}), by Theorem \ref{T2}, $G\oplus I $ is Lie nilpotent (Lie solvable).
\end{enumerate} 
\end{remark}

\begin{proposition}
   Let $G$ be a nilpotent/solvable multiplicative Lie algebra. Then $G\rtimes I$ is also nilpotent/solvable. 
\end{proposition}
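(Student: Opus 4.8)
The plan is to run both statements through the projection $p\colon G\rtimes I\to G$, $(g,a)\mapsto g$, which is readily checked to be a surjective multiplicative Lie algebra homomorphism (exactly as for $G\oplus I$ in the Proposition above) with kernel $J=\{(1,a):a\in I\}$. By Lemma~\ref{l1}, $J$ is an ideal of $G\rtimes I$ with $J\star''J=1$; moreover $J$ is abelian as a group and even central in $G\rtimes I$, since $[(g,a),(1,b)]=(1,[a,b])=(1,1)$ because $I\subseteq Z(G)$. I would also use the elementary identity $(g,a)\star''(1,b)=\big(g\star 1,(g\star b)(a\star 1)\big)=(1,g\star b)$, together with the standard fact that a surjective multiplicative Lie algebra homomorphism carries each term of the lower central series (resp.\ derived series) onto the corresponding term of the target: this follows by induction, since $p$ commutes with $[\,\cdot\,,\,\cdot\,]$ and with $\star$, and since $p$ of an ideal‑generated subgroup is the ideal generated by the image.

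I would dispose of the solvable case first, as it is short. Write $E=G\rtimes I$, and let $E^{(n)},G^{(n)}$ denote the derived series. Since $p(E^{(n)})=G^{(n)}$, the hypothesis $G^{(d)}=1$ forces $E^{(d)}\subseteq\ker p=J$. Now $E^{(d+1)}$ is generated as an ideal by the commutators $[x,y]$ and the products $x\star'' y$ with $x,y\in E^{(d)}$; the commutators are trivial because $E^{(d)}\subseteq J$ and $J$ is an abelian group, and the products are trivial because $E^{(d)}\star''E^{(d)}\subseteq J\star''J=1$ by Lemma~\ref{l1}. Hence $E^{(d+1)}=1$, so $G\rtimes I$ is solvable of derived length at most $d+1$.

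For the nilpotent case, $J\star''J=1$ alone is not enough — an extension of a nilpotent multiplicative Lie algebra by a central nilpotent ideal need not be nilpotent — so the nilpotency of $G$ must be invoked to control the ``$\star''$-action'' of $E$ on $J$. I would introduce the ideals $I_0=I$ and $I_{k+1}=G\star I_k$ of $G$ (the ideal generated by the products $g\star b$, $g\in G$, $b\in I_k$), and note by a one-line induction that $I_k\subseteq\gamma_{k+1}(G)$: indeed $I_0=I\subseteq\gamma_1(G)=G$, and $I_{k+1}=G\star I_k\subseteq G\star\gamma_{k+1}(G)\subseteq\gamma_{k+2}(G)$. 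Suppose $G$ is nilpotent of class $c$, so $\gamma_{c+1}(G)=1$ and hence $I_c=1$. Since $p(\gamma_n(E))=\gamma_n(G)$ we get $\gamma_{c+1}(E)\subseteq J$, and then I would prove by induction on $k$ that $\gamma_{c+1+k}(E)\subseteq\{(1,b):b\in I_k\}$: the base case $k=0$ is this inclusion, and in the inductive step $\gamma_{c+2+k}(E)$ is generated by $[E,\gamma_{c+1+k}(E)]$, which vanishes because $\gamma_{c+1+k}(E)\subseteq J\subseteq Z(E)$, together with $E\star''\gamma_{c+1+k}(E)$, whose generators have the form $(g,a)\star''(1,b)=(1,g\star b)$ with $b\in I_k$ and therefore lie in $\{1\}\times I_{k+1}$. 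Taking $k=c$ yields $\gamma_{2c+1}(E)\subseteq\{1\}\times I_c=\{(1,1)\}$, so $G\rtimes I$ is nilpotent of class at most $2c$.

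The same scheme proves the corresponding statements for the Lie-nilpotent and Lie-solvable notions of \cite{MRS}, whose series are built from $\star$ alone: in the solvable case one again uses $J\star''J=1$, and in the nilpotent case the vanishing of the iterated products, again expressed through the chain $I=I_0\supseteq I_1\supseteq\cdots$ and the bound $I_c\subseteq\gamma_{c+1}(G)=1$ (note that in the centralized setting $b\star g=(g\star b)^{-1}\in I_{k+1}$ as well, so the argument is insensitive to which side the defining products are taken on). The hard part of the whole argument is exactly this collapse of the chain $I_0\supseteq I_1\supseteq\cdots$ to $1$: it is the only place, beyond $J\star''J=1$, where the hypothesis on $G$ is genuinely needed, everything else being bookkeeping with the surjection $p$ and the centrality of $J$.
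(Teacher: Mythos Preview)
Your proof is correct but takes a different route from the paper. The paper's argument is a direct computation: by the formula for $\star''$ in Theorem~\ref{T1}, each component of a left-normed $(n{+}1)$-fold $\star''$-product in $G\rtimes I$ is a product of $(n{+}1)$-fold $\star$-products of elements of $G$, so the Point--Wantiez criterion for nilpotency from \cite{FP} transfers immediately and gives nilpotency of the \emph{same} class $n$; the solvable case is left implicit. Your argument is structural instead: you exploit the short exact sequence $1\to J\to G\rtimes I\to G\to 1$ with $J$ central and $J\star'' J=1$ (Lemma~\ref{l1}), push the lower central and derived series through the surjection $p$, and then kill what lands in $J$ via the descending chain $I_k=G\star I_{k-1}$. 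This is self-contained---it does not invoke the characterization from \cite{FP}---and treats the solvable case explicitly, at the price of weaker bounds (class at most $2c$ and derived length at most $d+1$).
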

\begin{proof}
    Suppose $G$ is a nilpotent multiplicative Lie algebra of class $n$. Then $(((x_1\star x_2)\star x_3)\star \cdots \star x_{n+1} ) = 1$ for all $x_1, x_2, \cdots , x_{n+1}\in G$ (see \cite{FP}). Now, it follows from Theorem \ref{T1} that $(x_1,a_1)\star''(x_2,a_2)\star''\cdots \star'' (x_{n+1},a_{n+1}) = 1$ for all $(x_1,a_1), (x_2,a_2), \cdots (x_{n+1},a_{n+1})\in G\rtimes I$. Hence, $G\rtimes I$ is also nilpotent of class $n$.
\end{proof}
\begin{remark}
\begin{enumerate}
\item Let $L$ be a nilpotent (solvable) Lie algebra and $M$ be an ideal. Then $L\rtimes M$ and $L\oplus M$ are also nilpotent/solvable.
    \item If $G$ is a Lie nilpotent/Lie solvable multiplicative Lie algebra (see \cite{MRS}), one can easily show that $G\rtimes I$ is also Lie nilpotent/Lie solvable.
\end{enumerate}
    
\end{remark}
\noindent{\bf Acknowledgment:} The first named and third named author are thankful to the National Board for Higher Mathematics (NBHM) for providing the  project ``Linear Representation of Multiplicative Lie Algebra" (02011/19/2023/NBHM (R.P)/R~\& D-II/5954). 

\noindent{\bf  Data Availability declaration:} Not applicable.

\noindent{\bf Competing Interests:} The authors have no competing interests to declare that are relevant to the content of this article.

\noindent{\bf Funding Declaration:} The first named author is supported by the National Board for Higher Mathematics (NBHM), project ``Linear Representation of Multiplicative Lie Algebra" (02011/19/2023/NBHM (R.P)/R~\& D-II/5954). The second and third named  authors have no relevant financial or non-financial interests to disclose.


\end{document}